\newtheorem{theorem}{Theorem}
\newtheorem{definition}[theorem]{Definition}
\newtheorem{lemma}[theorem]{Lemma}
\newtheorem{proposition}[theorem]{Proposition}
\newenvironment{proof}[1][Proof]{\textbf{#1.} }{\ \rule{0.5em}{0.5em}}
\renewcommand{\footnote}{\endnote}
\begin{document}

\title{TENSOR COMMUTATION MATRICES AND SOME GENERALIZATIONS OF THE PAULI MATRICES}
\author{Christian RAKOTONIRINA
\\Institut Sup\'erieur de Technologie d'Antananarivo, 
\\IST-T, BP 8122, Madagascar
\\ e-mail: rakotopierre@refer.mg
\\ Joseph RAKOTONDRAMAVONIRINA
\\D\'epartement de Math\'ematiques et Informatique, 
\\Universit\'e d'Antananarivo, Madagascar
\\ e-mail: joseph.rakotondramavonirina@univ-antananarivo.mg}

\maketitle

\begin{abstract}
In this paper, some tensor commutation matrices are expressed in termes of the generalized Pauli matrices by tensor products of the Pauli matrices.
\end{abstract}

\section{Introduction}

The tensor product of matrices is not commutative in general.However, a tensor commutation  matrix (TCM) ${n\otimes p}$, $S_{n\otimes p}$ commutes the tensor product $A\otimes B$ for any $A \in  \textbf{C}^{n\times n} $ and $B \in  \textbf{C}^{p\times p}$ as the following 
\begin{equation*} 
S_{n\otimes p}(A\otimes B)=(B\otimes A) S_{n\otimes p}
\end{equation*}
The tensor commutation matrices (TCMs) are useful in quantum theory and for solving matrix equations. In quantum theory $S_{2\otimes 2}$ can be expressed in the 
following way (Cf. for example\cite{Faddev95, Verstraete02, Fujii01})
\begin{equation}\label{eq1}
S_{2\otimes 2} =\frac{1}{2}  I_2 \otimes I_2+ \frac{1}{2}\sum_{i=1}^3 \sigma_i \otimes \sigma_i 
\end{equation}
where 
$\sigma_1=\begin{pmatrix}
          0 & 1\\
          1 & 0
         \end{pmatrix}$, 
$\sigma_2=\begin{pmatrix}
          0 & -i\\
          i &  0
         \end{pmatrix}$ and
$\sigma_3=\begin{pmatrix}
          1 & 0\\
          0 & -1
         \end{pmatrix}$
are the Pauli matrices, $I_2$ is the ${2\times2}$ unit-matrix.\\

\begin{equation*}
S_{2 \otimes 2} =\begin{pmatrix}
                1 & 0 & 0 & 0\\
                0 & 0 & 1 & 0\\
                0 & 1 & 0 & 0\\
                0 & 0 & 0 & 1
               \end{pmatrix}                 
\end{equation*}

\begin{equation*}
S_{3 \otimes 3} = \begin{pmatrix}
                 1 & 0 & 0 & 0 & 0 & 0 & 0 & 0 & 0\\
                 0 & 0 & 0 & 1 & 0 & 0 & 0 & 0 & 0\\
                 0 & 0 & 0 & 0 & 0 & 0 & 1 & 0 & 0\\
                 0 & 1 & 0 & 0 & 0 & 0 & 0 & 0 & 0\\
                 0 & 0 & 0 & 0 & 1 & 0 & 0 & 0 & 0\\
                 0 & 0 & 0 & 0 & 0 & 0 & 0 & 1 & 0\\
                 0 & 0 & 1 & 0 & 0 & 0 & 0 & 0 & 0\\
                 0 & 0 & 0 & 0 & 0 & 1 & 0 & 0 & 0\\
                 0 & 0 & 0 & 0 & 0 & 0 & 0 & 0 & 1
                  \end{pmatrix}               
\end{equation*}
The Gell-Mann matrices are a generalization of the Pauli matrices. The TCM ${n\otimes n}$ can be expressed in terms of ${n\times n}$ Gell-Mann matrices,
under the following expression \cite{Rakotonirina08}
\begin{equation}\label{eq2}
S_{n\otimes n} =\frac{1}{n}  I_{n} \otimes I_{n}+ \frac{1}{2}\sum_{i=1}^{n^{2}-1}\Lambda_i \otimes \Lambda_i 
\end{equation}
This expression of $S_{n\otimes n}$ suggests us the topic of generalizing the formula (\ref{eq1}) to an expression in terms of some generalized Pauli matrices.\\
For the calculus, we have used SCILAB a mathematical software for numerical analysis.
 
\section{SOME GENERALIZATIONS OF THE PAULI MATRICES}
In this section we give some generalizations of the Pauli matrices other than the Gell-Mann matrices
\subsection{KIBLER MATRICES}
The Kibler matrices are\\ 
$ k_1=\begin{pmatrix}
          1 & 0 & 0\\
          0 & 1 & 0\\
          0 & 0 & 1
         \end{pmatrix}$, 
$ k_2=\begin{pmatrix}
          0 & 1 & 0\\
          0 & 0 & 1\\
          1 & 0 & 0
         \end{pmatrix}$ 
$ k_3=\begin{pmatrix}
         0 & 0 & 1\\
         1 & 0 & 0\\
         0 & 1 & 0
         \end{pmatrix}$,\\
$ k_4=\begin{pmatrix}
          1 & 0 & 0\\
          0 & q & 0\\
          0 & 0 & q^{2} 
         \end{pmatrix}$, 
$ k_5=\begin{pmatrix}
          1 & 0 & 0\\
          0 & q^{2} & 0\\
          0 & 0 & q
         \end{pmatrix}$, 
$ k_6=\begin{pmatrix}
         0 & q & 0\\
         0 & 0 & q^{2}\\
         1 & 0 & 0
         \end{pmatrix}$,\\
$ k_7=\begin{pmatrix}
          0 & 0 & q\\
          1 & 0 & 0\\
          0 & q^{2} &0 
         \end{pmatrix}$, 
$ k_8=\begin{pmatrix}
          0 & 0 & q^{2} \\
          1 & 0 & 0\\
          0 & q & 0
         \end{pmatrix}$ 
$ k_9=\begin{pmatrix}
         0 & q^{2} & 0\\
         0 & 0 & q\\
         1 & 0 & 0
         \end{pmatrix}$,
are the  ${3\times 3}$ Kibler matrices \cite{kibler09}.\\
The Kibler matrices are traceless and
\begin{equation}\label{eq3}
\frac{1}{3}  I_{3}\otimes I_{3 }+ \frac{1}{3}\sum_{i=1}^8 \ k_i \otimes \ k_i =P
\end{equation}
with 
$ P =\begin{pmatrix}
                 0 & 0 & 0 & 0 & 1 & 0 & 0 & 0 & 0\\
                 0 & 1 & 0 & 0 & 0 & 0 & 0 & 0 & 0\\
                 0 & 0 & 0 & 0 & 0 & 0 & 0 & 1 & 0\\
                 0 & 0 & 0 & 1 & 0 & 0 & 0 & 0 & 0\\
                 1 & 0 & 0 & 0 & 0 & 0 & 0 & 0 & 0\\
                 0 & 0 & 0 & 0 & 0 & 0 & 1 & 0 & 0\\
                 0 & 0 & 0 & 0 & 0 & 1 & 0 & 0 & 0\\
                 0 & 0 & 1 & 0 & 0 & 0 & 0 & 0 & 0\\
                 0 & 0 & 0 & 0 & 0 & 0 & 0 & 0 & 1
       
      \end{pmatrix}$
a permutation matrix.
\subsection{THE NONIONS}
The nonions matrices are \cite{volkov10}\\
 $ q_0=\begin{pmatrix}
          1 & 0 & 0\\
          0 & 1 & 0\\
          0 & 0 & 1
         \end{pmatrix}$
$ q_1=\begin{pmatrix}
          0 & 1 & 0\\
          0 & 0 & 1\\
          1 & 0 & 0
         \end{pmatrix}$, 
$ q_2=\begin{pmatrix}
          0 & 1 & 0 \\
          0 & 0 & j \\
          j^{2} & 0 & 0
         \end{pmatrix}$\\ 
$ q_3=\begin{pmatrix}
         0 & 1 & 0\\
         0 & 0 & j^{2}\\
         j & 0 & 0
         \end{pmatrix}$,
$ q_4=\begin{pmatrix}
         0 & 0 & 1\\
         1 & 0 & 0\\
         0 & 1 & 0
         \end{pmatrix}$,
$ q_5=\begin{pmatrix}
          0 & 0 & j\\
          1 & 0 & 0\\
          0 &j^{2} & 0
         \end{pmatrix}$,\\
$ q_6=\begin{pmatrix}
          0 & 0 & j^{2} \\
          1 & 0 & 0     \\
          0 & j & 0
         \end{pmatrix}$, 
$ q_7=\begin{pmatrix}
          j & 0 & 0\\
          0 & j^{2} & 0\\
          0 & 0 & 1
         \end{pmatrix}$, 
$ q_8=\begin{pmatrix}
          j^{2} & 0 & 0\\
          0 & j & 0\\
          0 & 0 & 1
         \end{pmatrix}$  

The nonions are traceless and
\begin{equation}\label{eq4}
\frac{1}{3}  I_{3}\otimes I_{3 }+ \frac{1}{3}\sum_{i=1}^8 \ q_i \otimes \ q_i = P
\end{equation}

\subsection{GENERALIZATION BY TENSOR PRODUCTS OF PAULI MATRICES}
There are also some generalization of the Pauli matrices constructed by tensor products of the Pauli matrices, namely.
$(\sigma_i\otimes \sigma_j)_{0\leq i,j \leq3 }$,$(\sigma_i\otimes \sigma_j \otimes \sigma_k)_{0\leq i,j,k \leq 3}$, $(\sigma_{i_1}\otimes 
\sigma_{i_2} \otimes\ldots\otimes \sigma_{i_n})_{0\leq i_1,i_2,\ldots,i_n \leq 3}$ (Cf. for example, \cite{rigetti04,saniga06}).
The elements of the set $(\sigma_{i_1}\otimes\sigma_{i_2} \otimes\ldots\otimes \sigma_{i_n})_{0\leq i_1,i_2,\ldots,i_n \leq 3}$ satisfy the following properties
(Cf. for example, \cite{rigetti04,saniga06})
\begin{equation}\label{e5}
\Sigma_j^+=\Sigma_j \; (hermitian)
\end{equation}
\begin{equation}\label{e6}
\Sigma_j^2=I_{2^n}\;(Square\; root\; of\; unity )
\end{equation}
\begin{equation}\label{e7}
Tr\Sigma_j^{+}\Sigma_k=2^{n}\delta_{ij}\; (Orthogonal)
\end{equation}
\section{EXPRESSING A TENSOR COMMUTATION MATRIX IN TERME OF THE GENERALIZED PAULI MATRICES}
\begin{definition}
For $n\in N^{*},n\geq2$, we call tensor commutation matrix $n\otimes n$ the permutation matrix $ S_{n\otimes n}$ such that
\begin{equation*}
S_{n\otimes n} (a\otimes b)= \label{eq1}b\otimes a 
\end{equation*}
for any  $a$,$b \in \textbf{C}^{n\times 1}$.
\end{definition}
The relations (\ref{eq1}), (\ref{eq2}), (\ref{eq3}) and (\ref{eq4}) suggest us that there should be a generalization of the Pauli matrices $(s_i)_{0\leq i \leq n^2 -1}$ such that 
\begin{equation}\label{eq5}
S_{n\otimes n} =\frac{1}{n}  I_{n} \otimes I_{n}+ \frac{1}{n}\sum_{i=1}^{n^{2}-1}\ s_i \otimes \ s_i 
\end{equation}
We would like to look for matrices $(s_i)_{0\leq i \leq 8 } $ which satisfy the relation (\ref{eq5}).
The TCMs  $S_{4\otimes 4}$, $S_{8\otimes 8}$ can be expressed respectively in terms of the generalized Pauli matrices $(\sigma_i\otimes \sigma_j)_{0\leq i,j \leq3} $,
$(\sigma_i\otimes \sigma_j \otimes \sigma_k)_{0\leq i,j,k \leq 3} $ in the following way.
\begin{equation*}
S_{4\otimes 4} =\frac{1}{4}  I_4 \otimes I_4+ \frac{1}{4}\sum_{i=1}^{15} \ s_i \otimes \ s_i 
\end{equation*}
where 
$ s_1=\sigma_0 \otimes \sigma_1 $,
$ s_2=\sigma_0 \otimes \sigma_2 $, \ldots,
$ s_{13}=\sigma_3 \otimes \sigma_1 $,
$ s_{14}=\sigma_3 \otimes \sigma_2 $,
$ s_{15}=\sigma_3 \otimes \sigma_3 $.
\begin{equation*}
S_{8\otimes 8} =\frac{1}{8}  I_8 \otimes I_8 + \frac{1}{8}\sum_{i=1}^{63} \ S_i \otimes \ S_i 
\end{equation*}
where 
$ S_1=\sigma_0 \otimes \sigma_0 \otimes \sigma_1  $,...,
$ S_{63}=\sigma_3 \otimes \sigma_3 \otimes \sigma_3  $
That is to say, this generalization by the tensor products of the Pauli matrices satisfy the relation (\ref{eq5}). So we think that (\ref{eq5}) should be true for $n= 2^p$, $p\in N$, $ p \geq 2 $. For proving it, we give the 
following lemma which is the generalization of a proposition in \cite{Rakotonirina09}.\\
\begin{lemma}
If $\sum_{j=1}^m\ M_j \otimes\ N_j=\sum_{i=1}^n \ A_i \otimes \ B_i $ then\\
$\sum_{j=1}^m\ M_j \otimes\ K \otimes\ N_j=\sum_{i=1}^n \ A_i \otimes\ K\otimes\ B_i $
\end{lemma}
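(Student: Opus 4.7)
My strategy is to reduce the problem of inserting $K$ \emph{between} $M_j$ and $N_j$ to the much easier problem of appending $K$ on the \emph{right}, which follows instantly from the bilinearity of the tensor product. The bridge between these two forms is precisely the tensor commutation matrix whose defining property $S_{n\otimes p}(A\otimes B)=(B\otimes A)S_{n\otimes p}$ was recalled in the introduction.

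The first step is to handle the right-insertion case. For any fixed matrix $K$, the bilinearity of the tensor product (applied in the third slot) gives $\sum_j M_j \otimes N_j \otimes K = \bigl(\sum_j M_j \otimes N_j\bigr)\otimes K$, and similarly for the $A_i \otimes B_i$ side, so the hypothesis immediately yields
\begin{equation*}
\sum_{j=1}^m M_j \otimes N_j \otimes K \;=\; \sum_{i=1}^n A_i \otimes B_i \otimes K.
\end{equation*}
The second step is to convert middle-insertion into right-insertion via the TCM. Writing $p$, $q$, $k$ for the respective sizes of $M_j$, $N_j$, $K$, the identity $S_{q\otimes k}(N_j \otimes K)=(K\otimes N_j)S_{q\otimes k}$ gives $K\otimes N_j = S_{q\otimes k}(N_j\otimes K)S_{q\otimes k}^{-1}$, and tensoring on the left by $M_j$ (absorbing an $I_p$ on the identity side) yields the key identity
\begin{equation*}
M_j \otimes K \otimes N_j \;=\; (I_p \otimes S_{q\otimes k})\,(M_j \otimes N_j \otimes K)\,(I_p \otimes S_{q\otimes k}^{-1}).
\end{equation*}

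The third step is to combine the two. Summing the key identity over $j$ and using the equality established in the first step, the sum $\sum_j M_j\otimes K\otimes N_j$ and the sum $\sum_i A_i\otimes K\otimes B_i$ are both realized as the same conjugation of equal quantities, and therefore coincide. The ``hard part'' is really only bookkeeping: making sure the commutation matrix $S_{q\otimes k}$ is applied in the correct tensor slot (with $I_p$ on the outer side) and, in the general case where the $A_i,B_i$ could have sizes $(r,s)\ne(p,q)$ with $rs=pq$, that the analogous conjugation by $I_r\otimes S_{s\otimes k}$ is used on the right-hand side. No deeper obstruction arises, and the proof collapses to a few lines of tensor algebra.
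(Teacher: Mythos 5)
Your argument is correct, but it takes a genuinely different route from the paper's. The paper proves the lemma by a direct entrywise computation: writing $M_j=(M^{k_1}_{(j)k_2})$, $N_j=(N^{l_1}_{(j)l_2})$, etc., the hypothesis says that $\sum_{j}M^{k_1}_{(j)k_2}N^{l_1}_{(j)l_2}=\sum_{i}A^{k_1}_{(i)k_2}B^{l_1}_{(i)l_2}$ for all indices; multiplying both sides by the scalar $K^{j_1}_{j_2}$ and recognizing the resulting products as the entries in row $k_1j_1l_1$ and column $k_2j_2l_2$ of $\sum_j M_j\otimes K\otimes N_j$ and $\sum_i A_i\otimes K\otimes B_i$ finishes the proof in two lines. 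You instead conjugate by $I_p\otimes S_{q\otimes k}$ to move $K$ from the middle slot to the last slot, reduce to the trivial right-append case, and transfer the equality back. Both work; yours is conceptually tidier and reuses the TCM machinery that is the subject of the paper (plus the mixed-product rule $(A\otimes B)(C\otimes D)=AC\otimes BD$), while the paper's is more elementary, uses nothing beyond scalar multiplication, and is stated from the outset for arbitrary rectangular matrices $M_j\in\mathbf{C}^{p\times r}$, $N_j\in\mathbf{C}^{t\times u}$, $K\in\mathbf{C}^{q\times s}$.

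One caveat you should address explicitly: your key identity $M_j\otimes K\otimes N_j=(I_p\otimes S_{q\otimes k})(M_j\otimes N_j\otimes K)(I_p\otimes S_{q\otimes k}^{-1})$ presupposes that $N_j$ and $K$ are square, since the same commutation matrix is inverted on the other side. The paper applies the lemma to column vectors (the $e_{\alpha_i}\in\mathbf{C}^{2\times 1}$ in the proof of the Proposition), so the rectangular case is precisely the one that is needed. Your argument does survive, because for rectangular $A\in\mathbf{C}^{m\times n}$ and $B\in\mathbf{C}^{p\times q}$ one has $B\otimes A=S\,(A\otimes B)\,S'$ with two (generally different) permutation matrices $S,S'$ determined only by the four dimensions; since the $A_i,B_i$ must have the same sizes as the $M_j,N_j$ for the stated conclusion to make sense, the same sandwiching factors appear on both sides and still cancel. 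But as written your proof only covers the square case, and the generalized two-sided commutation relation is an extra ingredient that needs to be cited or proved.
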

\begin{proof}
Let $ K=(K^{j_1}_{j_2})\in \textbf{C}^{q\times s}$, $M_{j}=M^{k_{1}}_{(j)k_2}\in \textbf{C}^{p \times r} $, $A_{i}=A^{k_{1}}_{(i)k_2}\in \textbf{C}^{p \times r} $, $N_{j}=N^{l_{1}}_{(j)l_2}\in \textbf{C}^{t \times u} $ and
$B_{i}=B^{l_{1}}_{(j)l_2}\in \textbf{C}^{t \times u} $\\
$ \sum_{j=1}^{m}\ M^{k_{1}}_{(j)k_2}  N^{l_{1}}_{(j)l_2} = \sum_{i=1}^{n}\ A^{k_{1}}_{(i)k_2}  B^{l_{1}}_{(j)l_2} $\\
$ K^{j_1}_{j_2}\sum_{j=1}^{m}\ M^{k_{1}}_{(j)k_2}  N^{l_{1}}_{(j)l_2} =   K^{j_1}_{j_2}  \sum_{i=1}^{n}\ A^{k_{1}}_{(i)k_2}  B^{l_{1}}_{(j)l_2} $\\
$\sum_{j=1}^{m}\ M^{k_{1}}_{(j)k_2} K^{j_1}_{j_2}  N^{l_{1}}_{(j)l_2} =    \sum_{i=1}^{n}\ A^{k_{1}}_{(i)k_2}  K^{j_1}_{j_2}  B^{l_{1}}_{(j)l_2} $\\
$\sum_{j=1}^{m}\ M^{k_{1}}_{(j)k_2} K^{j_1}_{j_2}  N^{l_{1}}_{(j)l_2}$ and $\sum_{i=1}^{n}\ A^{k_{1}}_{(i)k_2}  K^{j_1}_{j_2}  B^{l_{1}}_{(j)l_2} $ \\
are respectively the elements of the $k_{1}j_{1}l_{1}$ row and $k_{2}j_{2}l_{2}$ colomn \\
the $\sum_{j=1}^m\ M_j \otimes\ K \otimes\ N_j $  and $\sum_{i=1}^n \ A_i \otimes\ K\otimes\ B_i $. That is true for  any $ k_{1},j_{1},l_{1}, k_{2},j_{2},l_{2}$.
Hence, $\sum_{j=1}^m\ M_j \otimes\ K \otimes\ N_j=\sum_{i=1}^n \ A_i \otimes\ K\otimes\ B_i $
\end{proof}\\
\begin{proposition}
For any $ n\in N^{*}, n\geq2 $,
\begin{equation*}
S_{2^{n}\otimes 2^{n}} =\frac{1}{2^{n}} \sum_{i_{1},i_{2},\ldots ,i_{n}=0}^{3}(\sigma_{i_{1}} \otimes\sigma_{i_{2}} \otimes \cdots \otimes \sigma_{i_{n}}) \otimes\ (\sigma_{i_{1}} \otimes\sigma_{i_{2}} \otimes \cdots\otimes\sigma_{i_{n}})
\end{equation*}
\end{proposition}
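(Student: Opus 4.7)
The plan is a direct entry-wise verification, using the base case (formula (\ref{eq1}), rewritten as $S_{2\otimes 2} = \tfrac{1}{2}\sum_{i=0}^{3}\sigma_i\otimes\sigma_i$ after absorbing the identity term via $\sigma_0 = I_2$) applied $n$ times, once per tensor position.

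First I would index the $2^n\times 2^n$ rows and columns by multi-indices $\vec a,\vec b,\vec c,\vec d\in\{0,1\}^n$. By the defining property of the swap, $(S_{2^n\otimes 2^n})_{(\vec a,\vec c),(\vec b,\vec d)} = \delta_{\vec a,\vec d}\,\delta_{\vec b,\vec c}$. Writing $E_{\vec i}=\sigma_{i_1}\otimes\cdots\otimes\sigma_{i_n}$, the corresponding entry of the right-hand side is computed using the fact that entries of tensor products are products of entries, $(E_{\vec i})_{\vec a,\vec b}=\prod_k (\sigma_{i_k})_{a_k,b_k}$, and that the independent summation over each $i_k$ factors the sum across positions:
$$\left[\frac{1}{2^n}\sum_{\vec i}E_{\vec i}\otimes E_{\vec i}\right]_{(\vec a,\vec c),(\vec b,\vec d)} = \prod_{k=1}^{n}\Bigl(\tfrac{1}{2}\sum_{i_k=0}^{3}(\sigma_{i_k})_{a_k,b_k}(\sigma_{i_k})_{c_k,d_k}\Bigr).$$
Each parenthesised factor is the $((a_k,c_k),(b_k,d_k))$-entry of $S_{2\otimes 2}$ by the base case, hence equals $\delta_{a_k,d_k}\delta_{b_k,c_k}$. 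Multiplying over $k$ reconstitutes $\delta_{\vec a,\vec d}\,\delta_{\vec b,\vec c}$, matching the swap entry.

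The preceding lemma is the algebraic rearrangement principle that justifies regrouping the tensor factors from the paired form $E_{\vec i}\otimes E_{\vec i}$ into a form amenable to the position-wise factorization above; equivalently, it supports an alternative inductive route in which, given the identity for $n$, one applies the lemma with $K=\sigma_j$ to insert a $\sigma_j$-factor between the two $E_{\vec i}$-blocks of the hypothesis, then sums over $j$ and tensors with an outer $\sigma_j$ to obtain the formula for $n+1$. I do not anticipate a real algebraic obstacle; the proof is essentially bookkeeping. The only subtle point will be aligning the row/column multi-index pairing $((\vec a,\vec c),(\vec b,\vec d))$ of the full swap with the per-position pairings $((a_k,c_k),(b_k,d_k))$ of the $n$ copies of $S_{2\otimes 2}$, which amounts to checking that the natural identification $\mathbb{C}^{2^n}\otimes \mathbb{C}^{2^n}\cong (\mathbb{C}^2\otimes\mathbb{C}^2)^{\otimes n}$ used on both sides is the same.
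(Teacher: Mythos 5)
Your proof is correct, but it follows a genuinely different route from the paper's. The paper argues by induction on $n$: it evaluates both sides on basis tensors $(e_{\alpha_{1}}\otimes\cdots\otimes e_{\alpha_{n+1}})\otimes(e_{\beta_{1}}\otimes\cdots\otimes e_{\beta_{n+1}})$ and invokes the preceding lemma twice --- once to splice the $(n+1)$-st Pauli factor into the middle of the induction hypothesis, and once more to absorb the single-qubit identity $\frac{1}{2}\sum_{j=0}^{3}(\sigma_{j}e_{\alpha})\otimes(\sigma_{j}e_{\beta})=e_{\beta}\otimes e_{\alpha}$ coming from (\ref{eq1}). Your argument instead verifies the identity entry by entry in one shot: because the sum over $(i_{1},\ldots,i_{n})$ ranges over independent indices and each entry of $E_{\vec{\imath}}\otimes E_{\vec{\imath}}$ is a product over positions, the whole matrix identity factors into $n$ independent copies of the completeness relation $\frac{1}{2}\sum_{i=0}^{3}(\sigma_{i})_{ab}(\sigma_{i})_{cd}=\delta_{ad}\delta_{bc}$, which is exactly the entry-wise form of (\ref{eq1}). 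This is shorter, dispenses with the insertion lemma entirely, covers all $n\geq 1$ uniformly, and generalizes verbatim to any family of $p\times p$ matrices satisfying the analogous single-factor completeness relation; the subtlety you flag about using the same identification $\mathbf{C}^{2^{n}}\otimes\mathbf{C}^{2^{n}}\cong(\mathbf{C}^{2})^{\otimes 2n}$ on both sides is real but harmless, since both sides are written in the same Kronecker (lexicographic) ordering. What the paper's route buys in exchange is the lemma itself, which it presents as a result of independent interest generalizing a proposition of an earlier reference; your sketch of how that lemma would drive the alternative inductive argument matches the paper's actual proof.
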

\begin{proof}
Let us proof it by reccurence. According to the relation (\ref{eq1}). The proposition is true for $n=1$. Suppose that is true for a $ n\in N^{*}, n\geqslant 2$.
 Let us take $e_1=\begin{pmatrix}
1\\
0
            \end{pmatrix}$, $e_2=\begin{pmatrix}
0\\
1
            \end{pmatrix}$, which form a basis of the \textbf{C}-vector space $\textbf{C}^{2 \times 1}$.
It suffise us to prove 
\begin{multline*}
\frac{1}{2^{n+1}} \sum_{j_{1},j_{2}, \ldots ,j_{n+1}=0}^{3}(\sigma_{j_{1}} \otimes \cdots \otimes \sigma_{j_{n+1}}) \otimes\ (\sigma_{j_{1}} \otimes \cdots 
\otimes\sigma_{j_{n+1}}) 
(e_{\alpha_{1}} \otimes \cdots \otimes \ e_{\alpha_{n+1}}) \\
\otimes\ (e_{\beta_{1}} \otimes \cdots \otimes \ e_{\beta_{n+1}}) = 
(e_{\beta_{1}} \otimes \cdots \otimes \ e_{\beta_{n+1}}) \otimes (e_{\alpha_{1}} \otimes \cdots \otimes \ e_{\alpha_{n+1}}) 
\end{multline*} 

\begin{multline*}
\frac{1}{2^{n}} \sum_{j_{1},j_{2}, \ldots ,j_{n}=0}^{3}(\sigma_{j_{1}} \otimes \cdots \otimes \sigma_{j_{n}}) \otimes\ (\sigma_{j_{1}} \otimes \cdots \otimes\sigma_{j_{n}}) (e_{\alpha_{1}} \otimes \cdots \otimes \ e_{\alpha_{n}}) \otimes\ (e_{\beta_{1}} \otimes \cdots \otimes \ e_{\beta_{n}}) \\
=(e_{\beta_{1}} \otimes \cdots \otimes \ e_{\beta_{n}}) \otimes (e_{\alpha_{1}} \otimes \cdots \otimes \ e_{\alpha_{n}}) 
\end{multline*}
that is 

\begin{multline*}
\frac{1}{2^{n}} \sum_{j_{1},j_{2}, \ldots ,j_{n}=0}^{3}((\sigma_{j_{1}} e_{\alpha_{1}})  \otimes \cdots \otimes (\sigma_{j_{n}} e_{\alpha_{n}})) \otimes\ ((\sigma_{j_{1}} e_{\beta_{1}})\otimes \cdots \otimes (\sigma_{j_{n}} e_{\beta_{n}}))\\
=(e_{\beta_{1}} \otimes \cdots \otimes \ e_{\beta_{n}}) \otimes (e_{\alpha_{1}} \otimes \cdots \otimes \ e_{\alpha_{n}}) 
\end{multline*}
According to the lemma above
\begin{multline*}
\frac{1}{2^{n}} \frac{1}{2} \sum_{j_{1},j_{2}, \ldots ,j_{n},j_{n+1}=0}^{3}[(\sigma_{j_{1}} e_{\alpha_{1}})  \otimes \cdots \otimes (\sigma_{j_{n+1}} e_{\alpha_{n+1}})] \otimes\ [(\sigma_{j_{1}} e_{\beta_{1}})\otimes \cdots \otimes (\sigma_{j_{n+1}} e_{\beta_{n+1}})]
\\
= \frac{1}{2} \sum_{j_{n+1}=0}^{3}(e_{\beta_{1}} \otimes \cdots \otimes \ e_{\beta_{n}})\otimes(\sigma_{j_{n+1}} e_{\alpha_{n+1}})\otimes (e_{\alpha_{1}} \otimes \cdots \otimes \ e_{\alpha_{n}}) \otimes(\sigma_{j_{n+1}} e_{\beta_{n+1}})
\end{multline*}
from the relation (\ref{eq1}) 
\begin{equation*}
S_{2 \otimes 2}(\alpha_{n+1}\otimes\beta_{n+1})=
\frac{1}{2} \sum_{j_{{n+1}}=0}^{3}(\sigma_{j_{n+1}}  e_{\alpha_{n+1}}) \otimes (\sigma_{j_{n+1 }}  e_{\beta_{n+1}}) =e_{\beta_{n+1}} \otimes e_{\alpha_{n+1}}
\end{equation*}
According again to the lemma above
\begin{multline*}
\frac{1}{2} \sum_{j_{{n+1}}=0}^{3}(e_{\beta_{1}} \otimes \cdots \otimes e_{\beta_{n}}) \otimes (\sigma_{j_{n+1}}  e_{\alpha_{n+1}})
\otimes(e_{\alpha_{1}} \otimes \cdots \otimes \ e_{\alpha_{n}})\otimes (\sigma_{j_{n+1}}  e_{\beta_{n+1}}) \\
=(e_{\beta_{1}} \otimes \cdots \otimes e_{\beta_{n}}\otimes e_{\beta_{n+1}})\otimes(e_{\alpha_{1}} \otimes \cdots \otimes e_{\alpha_{n}}\otimes e_{\alpha_{n+1}})
\end{multline*}
\end{proof}
\section*{Conclusion and discussion}
We have calculated the left hand side of the relations (\ref{eq3}) and (\ref{eq4}), respectively for the $3 \times 3$ Pauli matrices of Kibler and 
the nonions in expecting to have the formula (\ref{eq5}), for $n=3$. Instead of $S_{3\otimes3}$ we have the permutation 
matrix $P$ as result. However, that makes us to think that there should be other $3 \times 3$ Pauli matrices which would satisfy the relation (\ref{eq5}). 
These $3 \times 3$ Pauli matrices should not be the normalized Gell-Mann matrices in \cite{José08}, because the $4\times4$ matrices which satisfy 
the relation (\ref{eq5}) above are not the  $4\times4$ normalized Gell-Mann matrices in \cite{José08}, even though these normalized Gell-Mann matrices satisfy (\ref{eq5}). 
The relation  (\ref{eq5}) is satisfied by the generalized Pauli matrices by tensor products of the Pauli matrices, but only for $n=2^k$. However, there is no
$3 \times 3$ matrix, formed by zeros in the diagonal which satisfy both the relations (\ref{e5}) and (\ref{e6}). Thus, the $3 \times 3$ Pauli matrices which
should satisfy (\ref{eq5}), if there exist, do not satisfy both the relations (\ref{e5}),(\ref{e6}) and (\ref{e7}) like the generalized Pauli matrices by tensor products of the Pauli
matrices.


\begin{thebibliography}{9}
\bibitem {Faddev95}Faddev L.D.,\textit{Algebraic Aspects of the Bethe Ansantz}, Int.J.Mod.Phys.A, 10, No 13, May, 1848(1995).
\bibitem {Verstraete02}Verstraete F., \textit{A Study of Entanglement in Quantum Information Theory}, Th\`{e}se de Doctorat, Katholieke Universiteit Leuven, 90-93(2002).
\bibitem {Fujii01}Fujii K.: \textit{Introduction to Coherent States and Quantum Information Theory}, arXiv: quant-ph/0112090, prepared for 10th Numazu Meeting on Integral System, Noncommutative Geometry and Quantum theory, Numazu,
Shizuoka, Japan, 7-9 Mai 2002(2002).

                                                                                                
\bibitem{Rakotonirina08} Rakotonirina C., \textit{Expression of a Tensor Commutation Matrix in Terms of
the Generalized Gell-Mann Matrices}, International Journal of Mathematics and Mathematical Sciences,
Volume 2007, Article ID 20672
\bibitem{kibler09} Kibler M.R., \textit{An angular momentum approach to quadratic Fourier transform, Hadamard matrices, Gauss sums, 
mutually unbiased bases, the unitary group and the Pauli group}, J. Phys. A: Math. Theor. 42 353001 (28pp) (2009).
\bibitem {volkov10} Volkov G., \textit{Ternary Quaternions and Ternary $TU(3)$ algebra}, arXiv: 1006.5627v1 (2010).
\bibitem {rigetti04} Rigetti C., Mosseri R. and Devoret M., \textit{Geometric Approach to Digital Quantum Information}, 
Quantum Information Processing, Vol. 3, No. 6, December 2004 (2004)  
\bibitem {saniga06} Saniga M., Planat M. and Pracna P., \textit{Projective Ring Line Encompassing Two-Qubits}, hal-00111733, 
version 5 - 28 Dec 2006, (2006).
\bibitem{Rakotonirina09} Rakotonirina C., Hanitriarivo Rakotoson, \textit{Expressing a Tensor Permutation Matrix} 
$p^{\otimes n}$ \textit{in Terms of the Generalized Gell-Mann Matrices}, HEP-MAD-2007-317, http://www.slac.stanford.edu/econf/C0709107/pdf/317.pdf (2007) 
\bibitem{José08} Gil J.J.and  San Jos\'e I., \textit{Invariant indices of polarimetric purity
Generalized indices of purity for $n\otimes n$ covariance matrices}, Opt. Commun. 284 (2011) 38.
 





\end{thebibliography}
\end{document}